\newtheorem{prop}{Proposition}
\newtheorem{cor}[prop]{Corollary}
\newtheorem{lemma}[prop]{Lemma}
\theoremstyle{definition}
\newtheorem{defn}[prop]{Definition}
\newtheorem{example}[prop]{Example}
\title{A Note on Schanuel's Conjectures for Exponential Logarithmic Power Series Fields}
\author{Salma Kuhlmann}
\address{ Universit\"at Konstanz,\\ Fachbereich Mathematik und Statistik\\
78457 Konstanz, Germany.\\
Webpage: http://www.math.uni-konstanz.de/~kuhlmann/}
\email{salma.kuhlmann@uni-konstanz.de}
\author{Micka\"{e}l Matusinski}
\address{Institut de Math\'ematiques de Bordeaux,\\ Universit\'e Bordeaux 1,\\ 351 cours de la Lib\'eration\\
33405 Talence cedex, France.\\
Webpage: http://sites.google.com/site/mickaelmatusinski/}
\email{mickael.matusinski@u-bordeaux1.fr}
\author{Ahuva C. Shkop}
\address{Department of Mathematics,\\ Ben Gurion University of the Negev,\\
Be'er Sheva 84105, Israel\\
Webpage: http://www.math.bgu.ac.il/~shkopa/}
\email{shkopa@math.bgu.ac.il}
\begin{document}

\thanks{This joint work was inspired during the first author's visit to Ben Gurion University sponsored by the Institute for Advanced Studies
in mathematics at Ben Gurion University. The first author wishes to thank the institute for this opportunity.\\
The third author was supported by a postdoctoral fellowship funded by the Skirball Foundation via the Center for Advanced Studies in
Mathematics at Ben-Gurion University of the Negev. }

\begin{abstract}
In \cite{Ax}, J. Ax proved a transcendency theorem for certain differential fields of characteristic zero : the differential counterpart of
the still open Schanuel's conjecture about the exponential function over $\mathbb{C}$ \cite[page 30]{lang:intro-transc-numb}.
In this article, we derive from Ax's theorem transcendency results in the context of differential valued
exponential fields. In particular, we obtain results for exponential Hardy fields, Logarithmic-Exponential power
series fields and  Exponential-Logarithmic power series fields.

\end{abstract}

\maketitle

\subjclass{Primary 11J81; Secondary 12H05}
\keywords{Schanuel's conjectures, generalized series fields with derivation, exponential-logarithmic series fields with derivation}

In \cite{Ax}, J. Ax proves the following conjecture {\bf
(SD)} due to S. Schanuel, which is the differential counterpart of
the still open Schanuel's conjecture about the exponential function over $\mathbb{C}$ \cite[page 30]{lang:intro-transc-numb}.
Let $F$ be a field of characteristic $0$ and $D$ a derivation of $F$, i.e. $D(x+y) = D(x) + D(y) \mbox{ and
}D(xy)= xD(y)+yD(x)\>.$ We assume that the field of constants
$C$ contains $\mathbb{Q}$. Below td denotes the transcendence degree.

\medskip

\noindent{\bf (SD)} Let $y_1,...,y_n,z_1,...,z_n \in F^\times$ be such that
$Dy_k = \frac{Dz_k}{z_k} \mbox{ for } k= 1,...,n\>.$ If $\{Dy_k\>;\>
k= 1,\cdots,n\}$ is $\mathbb{Q}$-linearly independent, then
$$\mbox{td}_C C(y_1,...,y_n,z_1,...,z_n) \geq n+1.$$

\medskip

\noindent Consider the field of Laurent series $\mathbb{C}((t))$,
endowed with term by term derivation. The field of constants is
$\mathbb{C}$. Let $\mathbb{C}[[t]]$ denote the ring of formal power
series with complex coefficients in the variable $t$, i.e.
$\mathbb{C}[[t]]$ is the ring of Laurent series with nonnegative
exponents. It is a valuation ring (Definition 5). For any series
$y\in\mathbb{C}((t))$, let $y(0)$ denote its constant term. The
exponential map $\exp$ on $\mathbb{C}[[t]]$ is given by the Taylor
series expansion $\sum_{k\geq 0}\frac{y^k}{k!}$ and satisfies that
$Dy =
\frac{D\exp(y)}{\exp(y)}$.\\

\noindent A corollary to {\bf (SD)} which appears in Ax's paper is:

\medskip

\noindent {\bf Corollary B:} Let $y_i\in \mathbb{C}[[t]]$ such that
$y_i - y_i(0)$ are $\mathbb{Q}$-linearly independent,
$i=1,\cdots,n$. Then $\mbox{td}_\mathbb{C}
\mathbb{C}(y_1,...y_n,\exp(y_1),\cdots,\exp(y_n))\geq n+1$.

\medskip

\noindent We rephrase {\bf (SD)} as follows:

\noindent{\bf Theorem A:} Let $y_1,...,y_n,z_1,...,z_n \in F^\times$ be such
that $Dy_k = \frac{Dz_k}{z_k} \mbox{ for } k= 1,...,n\>.$ If
$\mbox{td}_CC(y_1,...,y_n, z_1,...,z_n) \leq n$, then $\sum_{i=1}^n
m_iy_i\in C$ for some $m_1,...,m_n \in \mathbb{Q}$ not all zero.

\medskip

\begin{defn}\label{defi:dvef} A \emph{differential valued exponential field} $K$ is a field of characteristic $0$, equipped with
a derivation $D:K \to K$, a valuation $v: K^{\times} \to G$ with value group $G$,
and an exponential map $\exp: K \to K^{\times}$ which satisfy the following:

\begin{itemize}
\item $\forall x \forall y (\exp(x+y) = \exp(x) \exp(y))$
\item $\forall x ( Dx= \frac{D\exp(x)}{\exp(x)} )$
\item The field of constants $C$ is isomorphic to the residue field of $v$.
\end{itemize}

\end{defn}

\noindent We denote the valuation ring by $\mathcal{O}_v$, the
maximal ideal by $\mathcal{M}_v$, and the residue field
$\bar{K}= \mathcal{O}_v /\mathcal{M}_v$. We thus require that
the field of constants $C\subseteq K$ is (isomorphic to) $\bar{K}$,
i.e. every element $y$ of $\mathcal{O}_v$ has a unique
representation $y= c + \epsilon$ where $c \in C$ and $\epsilon \in
\mathcal{M}_v$ (so $\mathcal{O}_v = C \oplus \mathcal{M}_v$). For
$y\in \mathcal{O}_v $ we write $\bar{y}$ for the residue of $y$
which is the above $c \in C$. In particular $\bar{c} = c$ for $c\in C$.
\medskip

\noindent In this note, we generalize Corollary B to {\it arbitrary}
elements of a differential valued exponential field $K$, that is, to
elements $y_1, \cdots, y_n$ {\it not} necessarily in the valuation
ring $\mathcal{O}_v$ (see Corollary \ref{main}). In particular we
apply Corollary \ref{main} to exponential-logarithmic series and
other examples (Examples \ref{exphardy} to \ref{EL}). Since $K$ is
not in general a field of series, we need to find an abstract
substitute for the ``constant term'' $y(0)$ of a series. Since the
additive group of $K$ is a $\mathbb{Q}$-vector space, and
$\mathcal{O}_v $ is a subspace, we choose and fix a vector space
complement $\bf{A}$ such that $K = {\bf A} \oplus C \oplus
\mathcal{M}_v$. For $y\in K^{\times}$ we define $\mbox{co}_{\bf
A}(y):=\overline{(y-a)}$ for the uniquely determined $a \in {\bf A}$
satisfying that $(y-a) \in \mathcal{O}_v$. Note that $\mbox{co}_{\bf
A}(y)=\bar{y}$ if $y \in \mathcal{O}_v$.

\bigskip\noindent
In this setting we observe:
\begin{lemma} \label{mainlemma}
 Let $y_1,\cdots, y_n \in K$ such that
$\sum_{i=1}^{n}m_iy_i \in C$ for some $m_i\in \mathbb{Q}$, then
$\sum_{i=1}^{n}m_i (y_i - \mbox{co}_{\bf A}(y_i)) = 0$.
\end{lemma}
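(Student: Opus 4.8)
The plan is to reduce the statement to a routine linear-algebra computation in the $\mathbb{Q}$-vector space decomposition $K = \mathbf{A} \oplus C \oplus \mathcal{M}_v$, using the additivity of the map $y \mapsto \mathrm{co}_{\mathbf A}(y)$ together with the fact that elements of $C$ have trivial $\mathbf{A}$-component. The key observation is that for each $i$ we can write $y_i = a_i + c_i + \epsilon_i$ uniquely with $a_i \in \mathbf{A}$, $c_i \in C$, $\epsilon_i \in \mathcal{M}_v$; then $y_i - a_i = c_i + \epsilon_i \in \mathcal{O}_v$ has residue $c_i$, so by definition $\mathrm{co}_{\mathbf A}(y_i) = c_i$ and hence $y_i - \mathrm{co}_{\mathbf A}(y_i) = a_i + \epsilon_i$.

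First I would form the sum $\sum_i m_i(y_i - \mathrm{co}_{\mathbf A}(y_i)) = \sum_i m_i a_i + \sum_i m_i \epsilon_i$, and note that $\sum_i m_i a_i \in \mathbf{A}$ and $\sum_i m_i \epsilon_i \in \mathcal{M}_v$ since both are subspaces. Next I would invoke the hypothesis: $\sum_i m_i y_i \in C$, and expanding the left side in the direct sum gives $\sum_i m_i y_i = \big(\sum_i m_i a_i\big) + \big(\sum_i m_i c_i\big) + \big(\sum_i m_i \epsilon_i\big)$ with the three summands lying in $\mathbf{A}$, $C$, $\mathcal{M}_v$ respectively. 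Since an element of $C$ has, under the decomposition $K = \mathbf A \oplus C \oplus \mathcal M_v$, zero component in both $\mathbf{A}$ and $\mathcal{M}_v$, uniqueness of the decomposition forces $\sum_i m_i a_i = 0$ and $\sum_i m_i \epsilon_i = 0$.

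Combining these two vanishing statements with the identity from the first paragraph yields $\sum_i m_i(y_i - \mathrm{co}_{\mathbf A}(y_i)) = 0 + 0 = 0$, which is exactly the claim. There is no serious obstacle here; the only point requiring a little care is the unwinding of the definition of $\mathrm{co}_{\mathbf A}$ and the bookkeeping ensuring that the $\mathbf{A}$-part and the $\mathcal{M}_v$-part of a vector in $C$ both vanish — i.e. that we are genuinely using the internal direct sum $K = \mathbf A \oplus C \oplus \mathcal M_v$ and not merely that $C$ is a subspace. Once that is made explicit the proof is a two-line computation.
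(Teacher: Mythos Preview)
Your proof is correct and follows exactly the same approach as the paper: decompose each $y_i$ in the direct sum $K = \mathbf{A} \oplus C \oplus \mathcal{M}_v$, identify $\mathrm{co}_{\mathbf A}(y_i)$ with the $C$-component, and then use uniqueness of the decomposition together with the hypothesis $\sum_i m_i y_i \in C$ to force the $\mathbf{A}$- and $\mathcal{M}_v$-components of the sum to vanish. The only difference is that you spell out slightly more carefully why $\mathrm{co}_{\mathbf A}(y_i)$ equals the $C$-component $c_i$, which the paper simply asserts.
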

\begin{proof}
Write $y_i = a_i + c_i + \epsilon _i$ with $a_i \in {\bf A}$, $c_i =
\mbox{co}_{\bf A}(y_i) \in C$ and $\epsilon _i \in \mathcal{M}_v$.
We compute:
\[\sum_{i=1}^{n}m_iy_i = \sum_{i=1}^{n}m_ia_i + \sum_{i=1}^{n}m_i\mbox{co}_{\bf A}(y_i)
+ \sum_{i=1}^{n}m_i\epsilon_i. \>\]
Since $\sum_{i=1}^{n}m_iy_i \in C$, it follows by the uniqueness of
the decomposition that $\sum_{i=1}^{n}m_ia_i =
\sum_{i=1}^{n}m_i\epsilon_i = 0$  as required.
\end{proof}
\begin{cor} \label{main} Let $y_1,...,y_n \in K$ and
suppose that $$y_1 - \mbox{co}_{\bf
A}(y_1),\cdots,y_n-\mbox{co}_{\bf A} (y_n)$$ are
$\mathbb{Q}$-linearly independent. Then
$$\mbox{td}_CC(y_1,...,y_n,\exp(y_1),...,\exp(y_n)) \geq n+1.$$
\end{cor}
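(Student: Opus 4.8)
The plan is to reduce Corollary \ref{main} to Theorem A by contradiction, letting Lemma \ref{mainlemma} absorb the bookkeeping forced by replacing the series-theoretic ``constant term'' $y(0)$ by its abstract substitute $\mbox{co}_{\bf A}(y)$. First I would put $z_k := \exp(y_k)$ for $k = 1,\dots,n$. These lie in $K^\times$ by definition of the exponential map, and the second defining property of a differential valued exponential field gives $Dy_k = \dfrac{D\exp(y_k)}{\exp(y_k)} = \dfrac{Dz_k}{z_k}$, so the hypotheses of Theorem A are met (note that $C \supseteq \mathbb{Q}$ since $K$ has characteristic $0$). I also need each $y_k$ to be a unit: if $y_k = 0$, then since ${\bf A} \cap \mathcal{O}_v = 0$ the unique $a \in {\bf A}$ with $y_k - a \in \mathcal{O}_v$ is $a = 0$, so $\mbox{co}_{\bf A}(y_k) = \overline{0} = 0$ and hence $y_k - \mbox{co}_{\bf A}(y_k) = 0$, contradicting the assumed $\mathbb{Q}$-linear independence. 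Thus $y_1,\dots,y_n,z_1,\dots,z_n \in K^\times$.

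Now suppose, for contradiction, that $\mbox{td}_C C(y_1,\dots,y_n,z_1,\dots,z_n) \leq n$. Theorem A then supplies rationals $m_1,\dots,m_n$, not all zero, with $\sum_{i=1}^n m_i y_i \in C$. Feeding this into Lemma \ref{mainlemma} yields $\sum_{i=1}^n m_i\bigl(y_i - \mbox{co}_{\bf A}(y_i)\bigr) = 0$, a nontrivial $\mathbb{Q}$-linear relation among $y_1 - \mbox{co}_{\bf A}(y_1),\dots,y_n - \mbox{co}_{\bf A}(y_n)$ — contradicting the hypothesis. Hence $\mbox{td}_C C(y_1,\dots,y_n,z_1,\dots,z_n) \geq n+1$, which is precisely the assertion since $z_k = \exp(y_k)$.

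There is no genuinely hard step here: all the transcendence content is carried by Ax's theorem via Theorem A, and the only conceptual point is that the decomposition $K = {\bf A} \oplus C \oplus \mathcal{M}_v$ is exactly what lets one pass from a membership statement $\sum m_i y_i \in C$ to an honest linear relation — this is the role of Lemma \ref{mainlemma}. If I wanted to stress-test the argument, the place to check would be that Theorem A is literally the contrapositive of (SD): $\mbox{td}_C C(\ldots) \leq n$ forces $\mathbb{Q}$-linear dependence of $\{Dy_k\}$, i.e.\ $D\bigl(\sum m_i y_i\bigr) = 0$ for some nonzero rational tuple, i.e.\ $\sum m_i y_i \in C$; and that every $y_k$ and every $\exp(y_k)$ is a unit, which is handled above.
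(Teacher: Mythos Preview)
Your proof is correct and follows exactly the route the paper intends: the paper's own proof reads simply ``Follows immediately from Theorem A and Lemma \ref{mainlemma},'' and you have written out precisely that immediate deduction, including the minor bookkeeping that each $y_k$ is nonzero. There is no substantive difference in approach.
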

\begin{proof}
Follows immediately from Theorem A and Lemma \ref{mainlemma}.
\end{proof}
\begin{example} \label{exphardy} Let $H$ be a \emph{Hardy field}, i.e. a set of germs at $+\infty$ of
real functions  which is a field and is closed under
differentiation. It carries canonically a valuation $v$
corresponding to the comparison relation between germs of real
function in this context. Moreover we suppose that $H$ carries an
exponential \cite[Definition p. 94]{Kuhlmann}, so $H$ is a
differential valued exponential field (see \cite{Rosenlicht}).
Assume that $f_1, \cdots, f_n \in H^{\times}$ such that $v(f_i)\geq
0$ i.e. $\lim_{x \rightarrow \infty} f_i(x)\in \mathbb{R}$. If
$$f_1 - \lim_{x \rightarrow \infty} f_1(x), \cdots, f_n - \lim_{x
\rightarrow \infty} f_n(x)$$ are $\mathbb{Q}$-linearly independent,
then
$$\mbox{td}_CC(f_1,...,f_n,\exp(f_1),...,\exp(f_n)) \geq n+1.$$
For example, take $H$ to be the field of logarithmic-exponential
functions defined by G.H. Hardy in \cite{hardy}.
\end{example}
\noindent We now consider fields of generalized series. By
Kaplansky's embedding theorem in \cite{kap}, generalized series
fields are universal domains for valued fields in the equal
characteristic case.
\begin{defn}
Let $k$ be a field of characteristic 0 and $G$ a totally ordered
Abelian group. A \emph{generalized series} (with coefficients $a_g$
in $k$ and exponents $g$ in $G$) is a formal sum $a=\sum_{g\in G}
a_g t^g$ with well-ordered support ${\rm Supp}\ a:=\{g\in G\ |\
a(g)\neq 0\}$. These series form a field, $K=k((G))$, under
component-wise sum and convolution product \cite{hahn}. We consider
the canonical valuation $v_{\mbox\rm{min}}$ on $K$ defined by
$v_{\mbox\rm{min}}(a):=\min({\rm Supp}\ a)$. The value group is $G$,
the residue field is $k$, and the valuation ring is $k[[G^{\geq
0}]]$ (the ring of series with support in the positive cone of $G$).
E.g. if $G=\mathbb{Z}$ then $\mathbb{C}((G))$ is the field of
Laurent series and $\mathbb{C}[[G^{\geq 0}]] = \mathbb{C}[[t]]$ the
ring of formal power series.
\end{defn}
\noindent For $k((G))$ a canonical complement to the valuation ring
is given by ${\bf A}:= k[[G^{< 0}]]$ (series with support in the
negative cone of $G$). Then for $y\in K^{\times}$ we have
$\mbox{co}_{\bf A}(y)=y(0)$ the constant term of $y$. More
generally, if $F$ is a truncation closed subfield of $K$ (i.e. $y
\in F$ implies that every initial segment of $y\in F$ as well), then
${\bf A}_F:= F\cap \> k[[G^{< 0}]]$ is a canonical complement and
$\mbox{co}_{\bf A_F}(y)=y(0)$ for $y\in F^{\times}.$

\bigskip \noindent
We note that the proof of Corollary \ref{main} does not require that
the domain of the exponential map is $K$. For instance, in the next
example the exponential is only defined on the valuation ring.
\begin{example}\label{generalisedseries}
Take $K=\mathbb{R}((G))$ endowed with a series derivation $D$
\cite[Definition 3.3]{KM2}. For $\epsilon\in \mathbb{R}((G^{>0}))\>$,
$\exp(\epsilon):=\sum_{k\geq 0} \frac{\epsilon ^k}{k!}$ is well-defined and satisfies
$\exp(x+y) = \exp(x) \exp(y)$. The exponential is defined for a
series $y(0) + \epsilon$ in the valuation ring $\mathbb{R}[[G^{\geq
0}]]$ via $\exp(y(0) + \epsilon)=\exp(y(0))\exp(\epsilon)$. Observe
that $D(\epsilon)= \frac{D(\exp(\epsilon))}{\exp(\epsilon)}$, since
$D$ is a series derivation \cite[Corollary 3.9]{VDH}. Thus Corollary
B holds for arbitrary value group $G$ instead of just
$G=\mathbb{Z}$.

\medskip \noindent
To make the example more explicit, we describe how to obtain series
derivations on $K$. Take $G$ to be a \emph{Hahn group} over some
totally ordered set $\Phi$, i.e. the lexicographical product of
$\Phi$ copies of $\mathbb{R}$ restricted to elements with
well-ordered support. (Recall that, by Hahn's embedding theorem in
\cite{hahn}, the Hahn groups are universal domains for ordered
abelian groups : see \cite[Section 2]{KM2}). For any $\phi\in\Phi$
we denote by $\mathds{1}_\phi$ the element of $G$ corresponding to 1
for $\phi$ and to 0 for the others elements of $\Phi$. Any $g\in G$
can be written $g=\sum_{\phi\in\Phi}g_\phi\mathds{1}_\phi$ with
$g_\phi\in\mathbb{R}$ and where the sum has well-ordered support. We
consider the following special cases.

\smallskip
\noindent \textbf{Case 1.} Suppose that $\Phi$ admits an order
preserving map $\sigma$ into itself which is a right-shift, i.e.
$\sigma(\phi)>\phi$ \cite[Section 5.1, Example 1]{KM2} (e.g. take
$\Phi$ a totally ordered abelian group and $\sigma$ translation by
fixed positive element). We set $D(1)=0$ and define $D$ on $\Phi$
by: \begin{itemize}
\item $D(t^{\mathds{1}_\phi}):=t^{\mathds{1}_\phi-\mathds{1}_{\sigma(\phi)}}$
 i.e.
$\frac{D(t^{\mathds{1}_\phi})}{t^{\mathds{1}_\phi}}=t^{-\mathds{1}_{\sigma(\phi)}}
\>.$
\end{itemize}
By \cite[Proposition 5.2]{KM2} we extend $D$ to a well-defined
derivation on $\mathbb{R}((G))$ via the following axioms of a series
derivation:
\begin{itemize}
\item \textbf{Strong Leibniz rule}: for $g=\sum_\phi g_\phi \mathds{1}_\phi$, set $D(t^g):=
t^g\sum_{\phi} g_\phi \frac{D(t^{\mathds{1}_\phi})}{t^{\mathds{1}_\phi}}=\sum_{\phi} g_\phi t^{g-\mathds{1}_{\sigma(\phi)}}$;
\item \textbf{Strong linearity}: for $a=\sum_{g} a_g t^g$, set $D(a):=\sum_{g}a_gD(t^g)$.
\end{itemize}

\smallskip
\noindent \textbf{Case 2.} Suppose that $\Phi$ is isomorphic (as an
ordered set) to a subset of $\mathbb{R}$ \cite[Section 5.1, Example
3]{KM2}. Note that $\Phi$ may not admit a right-shift (and indeed no non-trivial endomorphisms at all) \cite[Theorem 3]{dushnik-miller:similarity-tranfo}. In this case, we define $D$ as follows:
\begin{itemize}
\item  If $\Phi$ has a greatest element $\phi_M=\max \Phi$, take $f$ to be a fixed embedding of $\Phi$ into $\mathbb{R}$ and set for any $\phi\in\Phi$,
$\frac{D(t^{\mathds{1}_\phi})}{t^{\mathds{1}_\phi}}:=t^{f(\phi)\mathds{1}_{\phi_M}}$.
       \item If $\Phi$ has no greatest element fix an increasing sequence $\phi_n <\phi_{n+1}$ cofinal in $\Phi$. Then consider
       the corresponding partition of $\Phi$ made of sub-intervals of the form $S_n:=[\phi_n,\phi_{n+1})$. Take $f$ to be an embedding of $\Phi$ into $\mathbb{R}_{>0}$.
       For any $\phi\in\Phi$, there is $n$ such that $\phi\in S_n$, then set $\frac{D(t^{\mathds{1}_\phi})}{t^{\mathds{1}_\phi}}:=t^{f(\phi)\mathds{1}_{\phi_{n+1}}}$.
 \end{itemize}
 Once again, by \cite[Proposition 5.2]{KM2} we can extend the map $D$ to a well-defined derivation on the corresponding field
$\mathbb{R}((G))$ via the above axioms of a series derivation.
\end{example}

\begin{example} \label{LE}
The field of Logarithmic-Exponential (LE) series is a
differential valued exponential field \cite{MMV}. Moreover, as it is the
increasing union of power series $\mathbb{R}((G_n))$ it is a
truncation closed subfield of $\mathbb{R}((G_{\omega}))$ where
$G_{\omega}:= \cup G_n$. So Corollary \ref{main} applies to
LE-series $y_1, \cdots, y_n$ such that $y_1-y_1(0),...,y_n-y_n(0)$
are $\mathbb{Q}$-linearly independent. This generalizes Ax's result
Corollary B to Laurent series that are not necessarily in the
valuation ring.
\end{example}
\begin{example} \label{EL}
The fields of Exponential-Logarithmic series EL($\sigma$) are
differential valued exponential fields \cite[Section 5.3 (2) and
Theorem 6.2]{KM1}. They are truncation closed, so again Corollary
\ref{main} applies to $y_1, \cdots, y_n \in \mbox{ EL}(\sigma)$ such
that $y_1-y_1(0),...,y_n-y_n(0)$ are $\mathbb{Q}$-linearly
independent. More explicitely, if we consider $G$ as the Hahn group
over some totally ordered set $\Phi$ endowed with a right-shift
automorphism $\sigma$, the construction given in \cite[Section 5.3
(2)]{KM1} is as follows:
\begin{itemize}
\item for $\phi\in\Phi$, set $\log(t^{\mathds{1}_\phi}):=t^{-\mathds{1}_{\sigma(\phi)}}$
and $D(t^{\mathds{1}_\phi}):=t^{\mathds{1}_\phi-\sum_n \mathds{1}_{\sigma^n(\phi)}}$;
\item for $g=\sum_\phi g_\phi\mathds{1}_\phi$, set $\log(t^g):=\sum_\phi g_\phi t^{-\mathds{1}_{\sigma(\phi)}}$ and
$D(t^g):= \sum_\phi g_\phi t^{g-\sum_n
\mathds{1}_{\sigma^n(\phi)}}$;
\item for any $a:=\sum_g a_gt^g=a_{g_0} t^{g_0} (1+\epsilon)$ with $a_{g_0}>0$, set $\log(a)=\log(a_{g_0})+\log(t^{g_0} )+\sum_{n\geq 1} \epsilon^n/n$ and $D(a):=\sum_g a_gD(t^g)$
.
\end{itemize}
$D$ is a series derivation making EL($\sigma$) into a differential
valued exponential field.
\end{example}
\noindent {\bf Acknowledgment:} We thank J. Freitag and D. Marker
for providing us with the notes of a seminar given by D. Marker on
Ax's paper.

\end{document}